\newcommand{\RR}{\mathbb{R}}
\newcommand{\PP}{\mathbb{P}}
\newcommand{\mc}{\mathcal}
\DeclareMathOperator{\spn}{span}
\DeclareMathOperator{\vol}{vol}
\DeclareMathOperator{\adj}{adj}
\newtheorem{theorem}{Theorem}[section]
\newtheorem{proposition}[theorem]{Proposition}
\newtheorem{corollary}[theorem]{Corollary}
\theoremstyle{definition}
\newtheorem{definition}[theorem]{Definition}
\newtheorem{example}[theorem]{Example}
\theoremstyle{remark}
\newtheorem*{remark}{Remark}
\title{Canonical forms of polytopes from adjoints}
\author{Christian Gaetz}
\address{Department of Mathematics, University of California, Berkeley, CA, USA}
\email{gaetz@berkeley.edu}
\date{February 2020}
\begin{document}
\maketitle

\begin{abstract}
Projectivizations of pointed polyhedral cones $C$ are positive geometries in the sense of Arkani-Hamed, Bai, and Lam \cite{AHBL}.  Their canonical forms look like
\[
\Omega_C(x)=\frac{A(x)}{B(x)} dx,
\]
with $A,B$ polynomials.  The denominator $B(x)$ is just the product of the linear equations defining the facets of $C$. We will see that the numerator $A(x)$ is given by the \emph{adjoint} polynomial of the dual cone $C^{\vee}$.  The adjoint was originally defined by Warren \cite{Warren} who used it to construct barycentric coordinates in general polytopes.

Confirming our intuition that the job of the numerator is to cancel unwanted poles outside the polytope, we will see that the adjoint is the unique polynomial of minimal degree whose hypersurface contains the \emph{residual arrangement} of non-face intersections of supporting hyperplanes of $C$.
\end{abstract}

\

These notes were prepared for a lecture given at MIT in February 2020. No originality is claimed for the results, but the details of some proofs may not have appeared elsewhere before.

\section{Setup}

Throughout, $C$ will denote a convex polyhedral cone in $\mathbb{R}^{m+1}$.  We will further assume that $C$ is \emph{pointed}, meaning that it does not contain any line.  Projective polytopes in $\mathbb{P}^m$ can be thought of as the images of pointed convex cones (with the origin removed) in $\mathbb{R}^{m+1}$ under the standard map $\mathbb{R}^{m+1} \to \mathbb{P}^m$; indeed, this is how Arkani-Hamed, Bai, and Lam \cite{AHBL} define projective polytopes.  Because of this correspondence we move freely between claims about cones $C$ and their associated projective polytopes $P$; in particular, we may write $\Omega_C$ for the canonical form $\Omega_P$ of the positive geometry $P$.  When it is convenient to work with the (affine) cross-sectional polytope of $C$, we always work with the slice $\{ (x_0,...,x_m) \in \RR^{m+1} \: | \: x_0=1\} \cap C$, and we always assume that this cross-sectional slice of $C$ contains the ``origin" $(1,0,\ldots,0)$ (if not, we may change chart on $\PP^m$). 

We write $V(C)$ for the set of unit vectors generating the vertex rays of $C$, the rays which are not in the convex hull of any other pair of rays from the cone.  These rays are the rays through the vertices of the cross-sectional polytope of $C$.

The \emph{dual cone} $C^{\vee}$ of $C$ is
\[
C^{\vee}=\{x \in \RR^{m+1} \: | \: x \cdot y \geq 0, \forall y \in C\}.
\]
It is not hard to see that $F \mapsto F^{\vee}$ is an inclusion reversing bijection from the faces of $C$ to the faces of $C^{\vee}$.  The \emph{dual polytope} of $P$ in $\RR^m = \{x_0=1\}$ is 
\[
P^{\vee}=\{(x_1,...,x_m) \: | \: x \cdot y \leq 1, \forall y \in P\}.
\]
Note that the $x_0=1$ cross section of $C^{\vee}$ is $-P^{\vee}$; this sign will reappear later in the slightly different conventions for adjoints of cones and of polytopes.

By a \emph{triangulation} of $C$, we mean a collection $T$ of simplicial cones of dimension $\dim(C)$ such that 
\begin{itemize}
    \item $\bigcup_{S \in T} S = C$,
    \item each intersection $S \cap S'$ of cones in $T$ is a face of both $S$ and $S'$, and 
    \item $V(S) \subseteq V(C)$ for all $S \in T$.
\end{itemize}
This last condition is not always required of triangulations, but will be important for thinking about adjoints.

\section{Adjoints of cones and polytopes}

For a simplicial cone $S$, we let $a_S$ denote the volume of the parallelepiped determined by the (length one) vertex rays $V(S)$ of $S$.  The following definition is due to Warren \cite{Warren}, who introduced it in order to describe barycentric coordinates on polytopes.

\begin{definition} \label{def:adjoint}
Let $C$ be a pointed convex cone in $\RR^{m+1}$ with triangulation $T$, the \emph{adjoint} of $C$ is the polynomial in $x_1,...,x_{m+1}$ defined by
\[
\adj_C(x)=\sum_{S \in T} a_S \prod_{v \in V(C)\setminus V(S)} (v \cdot x).
\]
A priori this polynomial depends on the triangulation $T$, but, as we will see shortly, it is in fact independent of $T$.
\end{definition}

\begin{remark}
The reader is warned that \cite{Kohn-Ranestad} and \cite{Warren} use different conventions for how $\adj_C$ is normalized, so the polynomials differ by a constant factor in the two papers.  The convention used in \cite{Kohn-Ranestad} is that 
\begin{equation} \label{eq:adjoint-of-polytope}
\adj_P=\sum_{\sigma \in \tau(P)} \vol(\sigma) \prod_{v \not \in \sigma} (1-v_1x_1 - \cdots -v_mx_m),
\end{equation}
where $\tau(P)$ is the triangulation of $P$ obtained from a triangulation of $C$ by intersecting with $\{x_0=1\}$, and where $(v_1,...,v_m)=v$ are coordinates in $\{x_0=1\} \cong \RR^m$.

When only interested in determining $\adj_C$ up to a constant, one may take \emph{any} (not necessarily normalized) set $V$ of vertex rays and use Definition \ref{def:adjoint}.  Since each vertex ray appears once in each summand (either in the term $a_S$ or in the product), the resulting polynomial is only changed by a scalar multiple.
\end{remark}

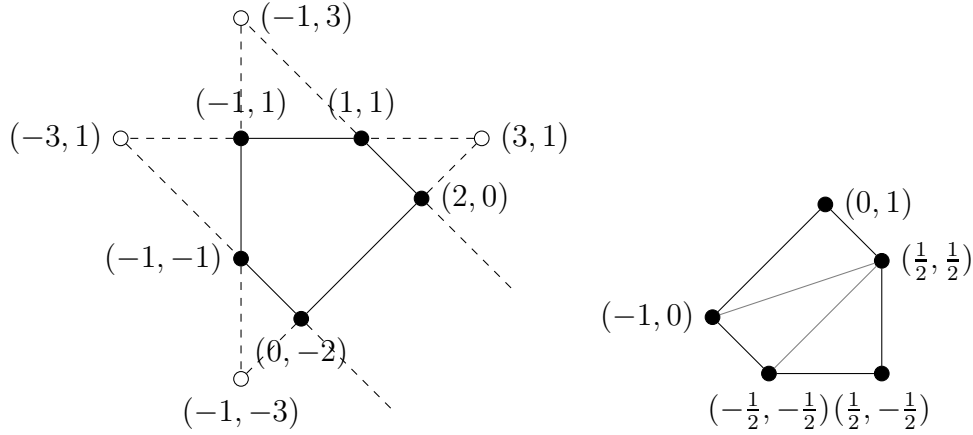
\begin{figure} \label{fig:pentagon}
\begin{tikzpicture}[scale=0.8]
\node[draw,shape=circle,fill=black,scale=0.5](a)[label=above: {$(1,1)$}] at (1,1) {};
\node[draw,shape=circle,fill=black,scale=0.5](b)[label=above: {$(-1,1)$}] at (-1,1) {};
\node[draw,shape=circle,fill=black,scale=0.5](c)[label=right: {$(2,0)$}] at (2,0) {};
\node[draw,shape=circle,fill=black,scale=0.5](d)[label=left: {$(-1,-1)$}] at (-1,-1) {};
\node[draw,shape=circle,fill=black,scale=0.5](e)[label=below: {$(0,-2)$}] at (0,-2) {};

\node[draw,shape=circle,fill=white,scale=0.5](f)[label=right: {$(3,1)$}] at (3,1) {};
\node[draw,shape=circle,fill=white,scale=0.5](g)[label=right: {$(-1,3)$}] at (-1,3) {};
\node[draw,shape=circle,fill=white,scale=0.5](h)[label=left: {$(-3,1)$}] at (-3,1) {};
\node[draw,shape=circle,fill=white,scale=0.5](i)[label=below: {$(-1,-3)$}] at (-1,-3) {};

\draw (a)--(b)--(d)--(e)--(c)--(a);
\draw[dashed] (c)--(f)--(a)--(g)--(b)--(h)--(d)--(i)--(e);
\draw[dashed] (e)--(1.5,-3.5);
\draw[dashed] (c)--(3.5,-1.5);
\end{tikzpicture}
\begin{tikzpicture}[scale=1.5]
\node[draw,shape=circle,fill=black,scale=0.5](a)[label=right: {$(0,1)$}] at (0,1) {};
\node[draw,shape=circle,fill=black,scale=0.5](b)[label=left: {$(-1,0)$}] at (-1,0) {};
\node[draw,shape=circle,fill=black,scale=0.5](c)[label=below: {$(-\frac{1}{2},-\frac{1}{2})$}] at (-0.5,-0.5) {};
\node[draw,shape=circle,fill=black,scale=0.5](d)[label=below: {$(\frac{1}{2},-\frac{1}{2})$}] at (0.5,-0.5) {};
\node[draw,shape=circle,fill=black,scale=0.5](e)[label=right: {$(\frac{1}{2},\frac{1}{2})$}] at (0.5,0.5) {};

\draw (a)--(b)--(c)--(d)--(e)--(a);
\draw[gray] (b)--(e)--(c);
\end{tikzpicture}
\caption{A pentagon $Q$ in the plane together with its supporting arrangement (left) and the dual polytope $Q^{\vee}$ with a triangulation (right).}
\end{figure}

\begin{example}
Let $Q$ be the pentagon shown in Figure \ref{fig:pentagon}, and $C$ the cone over it.  Then using (\ref{eq:adjoint-of-polytope}), up to scalars, we have 
\begin{align*}
\adj_{C^{\vee}}(x)&=\frac{1}{2}(x_0+\frac{1}{2}x_1+\frac{1}{2}x_2)(x_0-\frac{1}{2}x_1+\frac{1}{2}x_2) \\
&+ \frac{1}{2}(x_0-x_2)(x_0-\frac{1}{2}x_1+\frac{1}{2}x_2) \\
&+ \frac{1}{2}(x_0+x_1)(x_0-x_2) \\
&=\frac{3}{2}x_0^2+\frac{1}{4}x_0x_1-\frac{1}{8}x_1^2-\frac{1}{4}x_0x_2-\frac{1}{4}x_1x_2-\frac{1}{8}x_2^2.
\end{align*}
Notice that this polynomial vanishes on the intersection points of the supporting hyperplanes of $Q$ (remembering that $Q$ lives in the space $\{x_0=1\}$) including the intersection point $(0,1,-1)$ ``at infinity" of the parallel hyperplanes (see Figure \ref{fig:pentagon}).
\end{example}

Let $f(C)$ denote the set of facets of $C$.  Assuming $C$ is full-dimensional in $\RR^{m+1}$, each facet $F \in f(C)$ has a unique unit-length inward-pointing normal vector for which we write $n_F$.

\begin{theorem}[Warren \cite{Warren}] \label{thm:adjoint-in-terms-of-facets}
Let $L: \RR^{m+1} \to \RR$ be any linear function, and suppose that $C$ is full-dimensional.  Then
\begin{equation} \label{eq:adjoint-from-facets}
L(x)\adj_C(x)=\sum_{F \in f(C)} L(n_F) \adj_F(x) \prod_{v \in V(C) \setminus V(F)} (v \cdot x). 
\end{equation}
\end{theorem}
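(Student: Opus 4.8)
The plan is to reduce the identity to a purely local statement about a single top-dimensional simplicial cone, and then sum that local identity over the triangulation $T$, arranging for the contributions of interior walls to cancel in pairs while the boundary walls reassemble into the facet adjoints. Throughout I fix the triangulation $T$ of $C$ and compute each $\adj_F$ using the triangulation $T_F$ that $T$ induces on the facet $F$, so that $\adj_C$ and the various $\adj_F$ refer to compatible triangulations.

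First I would prove the simplicial case. Let $S\in T$ have unit vertex rays $v_0,\dots,v_m$, and for each $i$ let $S_i$ be the \emph{wall} of $S$ obtained by deleting $v_i$, with inward unit normal $n_{S_i}$. Writing $\{w_i\}$ for the basis dual to $\{v_i\}$, so that $w_i\cdot v_j=\delta_{ij}$, each $w_i$ is orthogonal to $\spn(S_i)$ and hence parallel to $n_{S_i}$; comparing lengths through the base-times-height relation $a_S=a_{S_i}\,(n_{S_i}\cdot v_i)$ and the fact that $n_{S_i}\cdot v_i=1/|w_i|$ gives $w_i=(a_{S_i}/a_S)\,n_{S_i}$. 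Expanding an arbitrary $x$ in the dual basis as $x=\sum_i (v_i\cdot x)\,w_i$ and applying $L$ then yields the local identity $L(x)\,a_S=\sum_i L(n_{S_i})\,a_{S_i}\,(v_i\cdot x)$. This dual-basis/adjugate computation is, incidentally, the source of the name ``adjoint''.

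Next I would multiply the local identity by $\prod_{v\in V(C)\setminus V(S)}(v\cdot x)$ and sum over $S\in T$. By Definition \ref{def:adjoint} the left-hand side collapses to $L(x)\,\adj_C(x)$. On the right, since $V(S_i)=V(S)\setminus\{v_i\}$, the factor $(v_i\cdot x)\prod_{v\in V(C)\setminus V(S)}(v\cdot x)$ becomes $\prod_{v\in V(C)\setminus V(S_i)}(v\cdot x)$, so the right-hand side is a sum over all pairs $(S,S_i)$ of $L(n_{S_i})\,a_{S_i}\prod_{v\in V(C)\setminus V(S_i)}(v\cdot x)$. I would then classify each $m$-dimensional wall $S_i$. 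If it is interior to $C$ it is a wall of exactly two top cones whose inward normals are negatives of one another, while $a_{S_i}$ and the product factor agree; hence these two terms cancel. If it lies on the boundary, it lies in a single facet $F$ of $C$, and since $\spn(S_i)=\spn(F)$ the inward normal of $S$ along $S_i$ equals $n_F$.

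Finally, grouping the surviving boundary terms by the facet $F$ that contains them, I would factor out $\prod_{v\in V(C)\setminus V(F)}(v\cdot x)$ and note that the walls lying in $F$ are exactly the maximal cones of the induced triangulation $T_F$, so that $\sum_{S_i\subseteq F} a_{S_i}\prod_{v\in V(F)\setminus V(S_i)}(v\cdot x)=\adj_F(x)$; this reproduces the right-hand side of (\ref{eq:adjoint-from-facets}). The main obstacle is the combinatorial and orientation bookkeeping of the middle step: one must know that in a triangulation of a full-dimensional pointed cone every $m$-dimensional wall is either shared by exactly two top cones with opposite inward normals, or lies in a single facet $F$ of $C$ with inward normal $n_F$. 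Full-dimensionality of $C$ is used precisely to guarantee this wall dichotomy and the identification of boundary normals with the $n_F$.
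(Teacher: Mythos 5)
Your proof is correct and follows essentially the same route as the paper: establish the local identity for a single simplicial cone, then sum over the triangulation, cancelling interior walls in pairs and regrouping the boundary walls into the induced triangulations of the facets of $C$. The only (cosmetic) difference is in the simplicial step, where you expand $x$ in the basis dual to $V(S)$ rather than testing $L$ against the spanning functionals $w\cdot x$ for $w\in V(S)$; both arguments come down to the same base-times-height relation $a_S=(n_{S_i}\cdot v_i)\,a_{S_i}$.
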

\begin{proof}
We first prove the theorem in the case $C=S$ is simplicial.  It suffices to prove the claim for $L(x)=(w \cdot x)$ for $w \in V(S)$, since these functions span $(\RR^{m+1})^*$.  Thus we need to show that 
\[
(w \cdot x)a_S = \sum_{F \in f(S)} (w \cdot n_F)a_F(v_F \cdot x),
\]
where $v_F$ is the unique element of $V(S) \setminus V(F)$.  If $w \neq v_F$, then $w \in V(F)$ and so $(w \cdot n_F)=0$.  Thus the above equation reduces to
\[
(w \cdot x)a_S = (v_F \cdot n_F)a_F(w \cdot x),
\]
and we have $a_S=(v_F \cdot n_F)a_F$ since the volume of a parallelepiped is the volume of its base times its height above that base.  Thus the theorem holds for simplicial cones. 

Now suppose that $C$ is not necessarily simplicial and let $T(C)$ be a triangulation of $C$.  Multiplying both sides by $L(x)$ in Definition \ref{def:adjoint} gives
\[
L(x) \adj_C(x) = \sum_{S \in T} L(x) a_S \prod_{v \in V(C) \setminus V(S)} (v \cdot x).
\]
Applying the simplicial case to expand $L(x) a_S=L(x) \adj_S(x)$ this becomes:
\begin{equation} \label{eq:after-sub}
L(x)\adj_C(x) = \sum_{S \in T(C)} \left( \sum_{F \in f(S)} L(n_F) a_F(x) \prod_{v \in V(C) \setminus V(F)} (v \cdot x) \right).
\end{equation}

If $F$ is an interior facet of $T(C)$, then $F$ is a facet of two cones from $T(C)$ on opposite sides of $F$; since $n_F$ is inward pointing in each of these, the corresponding terms cancel in the sum.  On the other hand, the exterior facets of cones in $T$ give triangulations $T(F)$ of each facet of $C$, so we may rewrite (\ref{eq:after-sub}) as:
\begin{align*}
L(x) \adj_C(x) & =\sum_{F \in f(C)} \left( \sum_{S \in T(F)} L(n_S) \adj_S(x) \prod_{v \in V(C) \setminus V(S)} (v \cdot x) \right) \\
&= \sum_{F \in f(C)} L(n_F) \left( \sum_{S \in T(F)}\adj_S(x) \prod_{v \in V(F) \setminus V(S)} (v \cdot x) \right) \prod_{v \in V(C) \setminus V(F)} (v \cdot x) \\
&= \sum_{F \in f(C)} L(n_F) \adj_F(x) \prod_{v \in V(C) \setminus V(F)} (v \cdot x).
\end{align*}
\end{proof}

\begin{corollary}
The polynomial $\adj_C$ does not depend on the triangulation $T$ of $C$ appearing in Definition \ref{def:adjoint}.
\end{corollary}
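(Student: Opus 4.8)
The plan is to induct on the dimension $d=\dim(C)$, using Theorem \ref{thm:adjoint-in-terms-of-facets} as the engine. The key observation is that the right-hand side of (\ref{eq:adjoint-from-facets}) is assembled entirely from data intrinsic to $C$: the facets $F \in f(C)$, their inward unit normals $n_F$, the vertex sets $V(C)$ and $V(F)$, and the facet adjoints $\adj_F$. None of these refer to the triangulation $T$ of $C$ chosen in Definition \ref{def:adjoint}, \emph{provided} we already know that each $\adj_F$ is itself independent of the way $F$ is triangulated. Granting that, the entire right-hand side is independent of $T$, and hence so is the left-hand side $L(x)\adj_C(x)$ for every linear $L$.

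Concretely, I would fix two triangulations $T,T'$ of $C$ and write $\adj_C^T,\adj_C^{T'}$ for the polynomials produced by Definition \ref{def:adjoint}. Warren's theorem, whose proof takes the triangulation as an input, then yields
\[
L(x)\,\adj_C^T(x) = \sum_{F \in f(C)} L(n_F)\,\adj_F(x) \prod_{v \in V(C)\setminus V(F)} (v\cdot x) = L(x)\,\adj_C^{T'}(x),
\]
where the middle expression is the same in both cases because, by the inductive hypothesis applied to the lower-dimensional cones $F$, the facet adjoints $\adj_F$ do not depend on the (possibly different) triangulations $T(F)$ and $T'(F)$ that $T$ and $T'$ induce on $F$. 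Choosing any nonzero linear $L$ and cancelling it in the polynomial ring, which is an integral domain, gives $\adj_C^T=\adj_C^{T'}$. For the base case I would take $d=1$, where $C$ is a single ray and admits only the trivial triangulation $\{C\}$, so there is nothing to prove. In the inductive step the facets $F$ have dimension $d-1$, and I would apply Theorem \ref{thm:adjoint-in-terms-of-facets} to $C$ within $\spn(C)$ and to each facet within its own span, keeping everything full-dimensional as the theorem requires.

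The step I expect to require the most care is exactly the inductive hypothesis feeding the middle equality above: two different triangulations $T,T'$ of $C$ generally restrict to two genuinely different triangulations of a given facet $F$, so equality of the two middle sums is not a formal identity but truly relies on $\adj_F$ being intrinsic to $F$. This is what makes the induction on dimension essential rather than cosmetic, and it is where the full force of the argument sits. Everything else—matching facets across the two triangulations, the cancellation of interior facets from the proof of Theorem \ref{thm:adjoint-in-terms-of-facets}, and the final division by $L$—is routine once the facet adjoints are known to be well-defined.
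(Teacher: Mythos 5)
Your proof is correct and takes essentially the same route as the paper: the paper also deduces triangulation-independence by applying Theorem \ref{thm:adjoint-in-terms-of-facets} recursively down to cones of low dimension with a unique triangulation, and you have simply made that recursion into an explicit induction on dimension (with base case $d=1$ rather than the paper's $d=2$). Your identification of where the inductive hypothesis is genuinely needed --- so that the facet adjoints appearing on the right-hand side are the same for both induced triangulations --- is the correct articulation of what ``applied recursively'' is doing in the paper's one-line proof.
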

\begin{proof}
Taking any nonzero linear function $L$ in Theorem \ref{thm:adjoint-in-terms-of-facets} we obtain a formula for $\adj_C$ in terms of the adjoints of the facets; this may be applied recursively to compute $\adj_C$ without choosing any triangulations (since a two-dimensional cone has a unique triangulation). 
\end{proof}

\section{Adjoints and canonical forms}

We have the following expression for $\Omega_P(x)$.

\begin{theorem}[See Eq. 7.173 of \cite{AHBL}]
\label{thm:adjoint-volume-of-shifted-dual}
Let $P$ be a full-dimensional polytope in $\{x_0=1\} \subset \RR^{m+1}$, then for any $x$ in the interior of $P$ we have 
\[
\Omega_P(x)=\vol((P-x)^{\vee}) dx,
\]
where $P-x$ denotes Minkowski difference.
\end{theorem}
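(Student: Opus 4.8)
The plan is to reduce to the case of simplices by exploiting that both sides of the claimed identity are additive under triangulation, and then to verify the simplex case by direct computation. Write $g_P(x) := \vol((P-x)^{\vee})$, a priori a function of $x \in \mathrm{int}(P)$; a cone decomposition of the dual (below) will show it is the restriction of a rational function. The two additivity statements I need are: (a) the canonical form satisfies $\Omega_P = \sum_S \Omega_S$ whenever the $S$ range over the maximal cells of a triangulation of $P$ — the standard fact that the spurious poles introduced on interior walls cancel, so $\Omega_P$ is triangulation-independent; and (b) the same additivity $g_P(x) = \sum_S g_S(x)$ holds as an identity of rational functions in $x$. Granting both, it suffices to prove $g_\Delta(x)\,dx = \Omega_\Delta(x)$ for a single simplex $\Delta$.

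For the simplex case, let $\Delta = \mathrm{conv}(v_0, \ldots, v_m) \subset \{x_0=1\}$ and let $\lambda_0(x), \ldots, \lambda_m(x)$ be the barycentric coordinates, i.e. the affine-linear forms with $\lambda_i(v_j) = \delta_{ij}$; these cut out the facets of $\Delta$. The canonical form of a simplex is $\Omega_\Delta(x) = c_\Delta\,\bigl(\prod_{i=0}^m \lambda_i(x)\bigr)^{-1} dx$ for an explicit constant $c_\Delta$, consistent with a simplex having empty residual arrangement (so its adjoint is constant). On the other side, the polar dual of a simplex is again a simplex: $(\Delta - x)^{\vee} = \{y : y\cdot(v_i - x) \le 1,\ \forall i\}$, and as $x$ approaches the facet $\{\lambda_k = 0\}$ the dual vertex attached to that facet escapes to infinity, so $g_\Delta$ is a rational function whose only poles lie along the same hyperplanes $\{\lambda_k = 0\}$, each simple. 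A direct volume computation for this dual simplex gives $g_\Delta(x) = c'_\Delta\,\bigl(\prod_i \lambda_i(x)\bigr)^{-1}$, and the constants are pinned down by comparing a single residue (or one interior evaluation), yielding $c'_\Delta = c_\Delta$.

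The main obstacle is establishing additivity (b), since duality reverses inclusions and the identity fails pointwise: for $x$ interior to $P$ but outside a cell $S$, the quantity $g_S(x)$ is not a genuine volume but the value of the analytically continued rational function (a signed volume). I would prove (b) exactly as Theorem \ref{thm:adjoint-in-terms-of-facets} is proved: set $h(x) = \sum_S g_S(x) - g_P(x)$ and show it has no poles. The poles of each $g_S$ lie on its facet hyperplanes, which are either facets of $P$ or interior walls of the triangulation; on a facet of $P$ the contributions assemble to match the pole of $g_P$, while on an interior wall $W$ shared by $S$ and $S'$ the two residues are equal and opposite — the inward normals to $W$ point in opposite directions, so the dual vertices produced by $W$ escape to $\pm\infty$ on the two sides — and they cancel, precisely as the interior facets cancel in the proof of (\ref{eq:after-sub}). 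Hence $h$ extends to a polynomial. Decomposing $(P-x)^{\vee}$ into cones over its facets from the origin exhibits $g_P$, and likewise each $g_S$, as a rational function of total degree $-m$, so $h(x) \to 0$ as $x \to \infty$ along a generic ray; a polynomial with this property vanishes identically.

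I would close by noting a more intrinsic route that avoids triangulating the target side: characterize $\Omega_P$ as the unique rational top-form with at worst simple poles along the facet hyperplanes, residue along each facet $F$ equal to $\Omega_F$, and no pole at infinity, and then verify that $g_P(x)\,dx$ meets this characterization. The pole structure is immediate from the geometry above, so the crux becomes the residue computation: as $x \to F$ the dual polytope $(P-x)^{\vee}$ degenerates into an unbounded prism over the dual facet corresponding to $F$, whose bounded cross-section is $(F-x)^{\vee}$ computed inside the affine span of $F$; extracting the coefficient of the simple pole then gives $\vol((F-x)^{\vee}) = \Omega_F$, the required recursion, with base case a segment. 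In either approach this residue-degeneration analysis is where the real work lies.
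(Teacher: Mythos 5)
The paper does not actually prove this statement --- it is quoted from \cite{AHBL} (Eq.\ 7.173) --- so there is no in-paper argument to compare against. Your strategy (triangulate $P$, use additivity of canonical forms over the cells, show the analytically continued dual volumes $g_S(x)=\vol((S-x)^{\vee})$ are likewise additive as rational functions, and verify a single simplex by direct computation) is the standard route to this result and is sound in outline; the simplex base case, with $g_\Delta = c_\Delta/\prod_{i=0}^m \lambda_i(x)$, checks out.

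The genuine gap, which you flag yourself, is that everything rests on a residue computation that is never carried out: for a simplex $S$ with facet $F$ lying on a hyperplane $H$, you need that $g_S$ has a simple pole along $H$ whose residue equals $g_F$ computed in the affine span of $F$, up to a sign determined by which side of $H$ the cell lies on. Without this, neither the cancellation of $\sum_S g_S$ on interior walls nor the matching of its poles with those of $g_P$ along facets of $P$ is established, so the claim that $h=\sum_S g_S-g_P$ extends to a polynomial is unsupported; and your alternative ``intrinsic'' route needs exactly the same computation to verify the residue axiom, so it does not avoid the issue. Note also that the facet-matching step is secretly an induction on dimension (the residue of $g_P$ along a facet hyperplane should be $g_F$, i.e.\ the theorem one dimension down), which you should set up explicitly, with a segment as base case. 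Two smaller points: the degree of $g_P$ at infinity is $-(m+1)$, not $-m$ (already visible for a simplex, whose denominator has $m+1$ factors), though decay at infinity --- which is all you use --- still holds; and you should justify up front that $g_P$ is rational with at worst simple poles on the facet hyperplanes before reasoning about its poles, e.g.\ via the vertex formula $u_F(x)=v_F/(1-v_F\cdot x)$ and a determinant expansion, which is precisely the device used in the proof of Theorem \ref{thm:adjoint-is-numerator}. Carrying out that one determinant exercise for a simplex would close the argument.
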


We will use Theorem \ref{thm:adjoint-volume-of-shifted-dual} to see that $\adj_{P^{\vee}}(x)$ is the numerator of $\Omega_P$.

\begin{theorem} \label{thm:adjoint-is-numerator}
Let $P$ be a full-dimensional polytope in $\RR^m \cong \{x_0=1\} \subset \RR^{m+1}$, then we have
\[
\Omega_P(x)=\frac{\adj_{P^{\vee}}(x)}{\prod_{F \in f(P)} (1-v_F \cdot x)} dx,
\]
where $v_F$ is the vector in $\RR^m$ such that $v_F \cdot y=1$ for all $y \in F$.
\end{theorem}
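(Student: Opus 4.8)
The plan is to combine Theorem~\ref{thm:adjoint-volume-of-shifted-dual} with the definition of the adjoint. By that theorem, $\Omega_P(x) = \vol((P-x)^\vee)\,dx$ for $x$ in the interior of $P$, so the whole problem reduces to computing the volume of the shifted dual polytope $(P-x)^\vee$ as a rational function of $x$ and recognizing the numerator as $\adj_{P^\vee}$ and the denominator as $\prod_F (1-v_F\cdot x)$. The key geometric observation is how the dual transforms under Minkowski subtraction: if $P$ has facets given by $v_F \cdot y = 1$, then the translate $P-x$ has facets $v_F \cdot y = 1 - v_F\cdot x$, and dualizing rescales each vertex $v_F$ of $P^\vee$ by the factor $1/(1-v_F\cdot x)$. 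Thus $(P-x)^\vee$ is obtained from $P^\vee$ by moving each vertex $v_F$ radially to $v_F/(1-v_F\cdot x)$.

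First I would fix a triangulation $\tau(P^\vee)$ of the dual polytope $P^\vee$ using its vertices $\{v_F\}_{F\in f(P)}$, which induces a triangulation of $(P-x)^\vee$ with the correspondingly scaled vertices. Then I would compute $\vol((P-x)^\vee)$ by summing the volumes of the simplices in this triangulation. For a simplex $\sigma$ with vertices $v_{F_0},\ldots,v_{F_m}$ (together with the origin, since we are in $\RR^m$ and the dual contains the origin in its interior), the scaled simplex has vertices $v_{F_i}/(1-v_{F_i}\cdot x)$, and its volume is the volume of $\sigma$ times the product of the scaling factors $\prod_{i} 1/(1-v_{F_i}\cdot x)$. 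The hard part will be bookkeeping the determinant/volume computation carefully: I expect that after putting everything over the common denominator $\prod_{F\in f(P)}(1-v_F\cdot x)$, each simplex $\sigma$ contributes $\vol(\sigma)\prod_{F\notin\sigma}(1-v_F\cdot x)$ to the numerator.

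Summing over all simplices $\sigma$ in the triangulation then gives exactly
\[
\vol((P-x)^\vee)=\frac{\sum_{\sigma\in\tau(P^\vee)}\vol(\sigma)\prod_{v_F\notin\sigma}(1-v_F\cdot x)}{\prod_{F\in f(P)}(1-v_F\cdot x)},
\]
and the numerator is precisely $\adj_{P^\vee}(x)$ by the convention in equation~(\ref{eq:adjoint-of-polytope}), once I match the facets $F$ of $P$ with the vertices $v_F$ of $P^\vee$ under the inclusion-reversing duality. The denominator matches the claimed product over facets of $P$. I expect the main obstacle to be threefold: first, verifying that the facet description $v_F\cdot y=1$ of $P$ dualizes to $v_F$ as a vertex of $P^\vee$ with the correct scaling behavior under translation; second, checking the sign and normalization conventions carefully, since the excerpt already flags that the $x_0=1$ cross section of $C^\vee$ is $-P^\vee$ and that adjoints of cones and polytopes differ by a sign convention; and third, confirming that the radial scaling of vertices exactly produces a valid triangulation of $(P-x)^\vee$ with no combinatorial change, which holds precisely when $x$ is interior to $P$ so that all factors $1-v_F\cdot x$ remain positive.
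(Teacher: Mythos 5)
Your proposal follows essentially the same route as the paper: reduce via Theorem~\ref{thm:adjoint-volume-of-shifted-dual} to computing $\vol((P-x)^\vee)$, observe that the vertices of $(P-x)^\vee$ are the radial rescalings $v_F/(1-v_F\cdot x)$ of the vertices of $P^\vee$, transport a triangulation of $P^\vee$ to one of $(P-x)^\vee$, and use the fact that for these particular scaling factors the simplex volumes multiply by $\prod_i 1/(1-v_{F_i}\cdot x)$ (a determinant identity the paper likewise leaves as an exercise). One small correction: a simplex of $\tau(P^\vee)$ already has $m+1$ vertices among the $v_F$ and does not include the origin as a vertex, so the scaling identity is the one for the special dilations $\lambda_i=1/(1-v_{F_i}\cdot x)$ applied to an arbitrary simplex, not the easy one for cones over the origin.
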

\begin{proof}
We identify $\{x_0=1\}$ with $\RR^{m}$ and use the corresponding inner product, so the inner product of two points in this plane does not reflect the fact that they both have $x_0$ coordinate equal to one.  By Theorem \ref{thm:adjoint-volume-of-shifted-dual} it suffices to show, for $x$ in the interior of $P$, that:
\begin{align}
\vol((P-x)^{\vee})&=\frac{\adj_{P^{\vee}}(x)}{\prod_{F \in f(P)} (1-v_F \cdot x)}  \\
&=\left(\sum_{\sigma \in \tau(P^{\vee})} \vol(\sigma) \prod_{v \in V(P^{\vee}) \setminus V(\sigma)} (1-v \cdot x)\right) / \left(\prod_{F \in f(P)} (1-v_F \cdot x)\right) \\
&=\sum_{\sigma \in \tau(P^{\vee})} \frac{\vol(\sigma)}{\prod_{v \in V(\sigma)} (1-v \cdot x)},
\label{eq:sum-of-simplices}
\end{align}
where $ \tau(P^{\vee})$ is some triangulation (not introducing any new vertices) of $P^{\vee}$ and where in the last step we have used the fact that the normal vectors $v_F$ for $F \in f(P)$ are exactly the vertices of $P^{\vee}$.

Now, notice that the vertices $u_F(x)$ of $(P-x)^{\vee}$ are just multiples of the vertices $v_F$ of $P^{\vee}$, since $P-x$ is just a translate of $P$.  More precisely, we have
\begin{equation} \label{eq:dilation-of-vertices}
u_F(x)=\frac{1}{1-v_F \cdot x} v_F.
\end{equation}
Since we have this natural correspondence between the vertices of $(P-x)^{\vee}$ and those of $P^{\vee}$, the triangulation $\tau(P^{\vee})$ gives a triangulation $\tilde{\tau}$ of $(P-x)^{\vee}$, and, of course, we have
\[
\vol((P-x)^{\vee})=\sum_{\tilde{\sigma} \in \tilde{\tau}} \vol(\sigma).
\]
Comparing this to (\ref{eq:sum-of-simplices}), it suffices to show that the volumes of the dilated simplices $\tilde{\sigma} \in \tilde{\tau}$ are just the volumes of the original simplex $\sigma$ times the product of the dilating factors of the vertices.  It is not true that volumes of simplices behave this way under arbitrary dilations of the vertices, but it is true for dilations of this special form, as can be seen from an elementary exercise in linear algebra (after expressing the volumes in terms of determinants).

\end{proof}

\section{Residual arrangements and adjoints}
\label{sec:residuals}

In light of Theorem \ref{thm:adjoint-is-numerator}, the material in this section is meant to confirm the intuitive idea that the numerator of $\Omega_P$ necessarily cancels unwanted poles outside of $P$ in a ``minimal" way.

\begin{proposition}
\label{prop:degree-of-adjoint}
The polynomial $\adj_C$ is homogeneous of degree $|V(C)|-\dim(C)$.
\end{proposition}
\begin{proof}
This is apparent from Definition \ref{def:adjoint} since the products
\[
\prod_{v \in V(C) \setminus V(S)} (v \cdot x)
\]
each have $|V(C)|-|V(S)|=|V(C)|-\dim(C)$ linear terms.
\end{proof}

\begin{definition}
The \emph{supporting arrangement} $\mc{H}_C$ of $C$ is the arrangement of supporting hyperplanes for the facets of $C$.  The \emph{residual arrangement} $\mc{R}_C$ is the arrangement of linear subspaces of $\RR^{m+1}$ which are intersections of hyperplanes from $\mc{H}_C$ and which do not contain any face of $C$.  In general, $\mc{R}_C$ is an arrangement of subspaces of varying dimensions.

We write $\mc{H}_P$ and $\mc{R}_P$ for the analogous arrangements associated to a projective polytope $P \subset \PP^m$; these are the images of $\mc{H}_C$ and $\mc{R}_C$ under the map $\RR^{m+1} \setminus 0 \to \PP^m$ sending $C \setminus 0$ to $P$.
\end{definition}

\begin{proposition}[Kohn and Ranestad \cite{Kohn-Ranestad}; special case due to Warren \cite{Warren}]
\label{prop:adjoint-vanishes}
The adjoint $\adj_{C}$ vanishes on the residual arrangement $\mc{R}_{C^{\vee}}$ of the dual cone.
\end{proposition}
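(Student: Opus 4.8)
The plan is to fix a subspace $W \in \mc{R}_{C^{\vee}}$ and show $\adj_C$ vanishes on it. The key matching is that the hyperplanes of $\mc{H}_{C^{\vee}}$ are exactly the $H_v = \{x : v \cdot x = 0\}$ for $v \in V(C)$ (the facet of $C^\vee$ dual to the vertex ray $v$ is cut out by $H_v$), and these are precisely the linear forms appearing in $\adj_C$. So write $W = W_U := \bigcap_{v \in U} H_v = (\spn U)^\perp$, where $U := \{v \in V(C) : H_v \supseteq W\}$. First I would record the elementary reduction: for $x \in W_U$, any summand of $\adj_C(x)$ indexed by a simplex $S$ with $U \not\subseteq V(S)$ contains a factor $(u \cdot x)$ with $u \in U$, hence vanishes, so $\adj_C|_{W_U} = \sum_{S \in T,\, U \subseteq V(S)} a_S \prod_{v \in V(C) \setminus V(S)}(v \cdot x)$. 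If no simplex of $T$ contains $U$ (in particular whenever $U$ is linearly dependent) this sum is empty and we are done; otherwise $U$ is linearly independent and $\text{cone}(U)$ is a face of the fan $T$.

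Next I would translate the residual hypothesis into a statement about this face. Using the inclusion-reversing duality $F \mapsto F^{\vee}$ together with the dimension count $\spn(F^{\vee}) = (\spn F)^\perp$, one checks $F^{\vee} \subseteq H_v \iff v \in \spn F$; hence $W_U$ contains the face $F^{\vee}$ of $C^{\vee}$ iff $U \subseteq \spn F$. Therefore $W_U$ being residual means no proper face $F$ of $C$ has $U \subseteq \spn F$, i.e. the minimal face of $C$ containing $\text{cone}(U)$ is $C$ itself, i.e. $\text{cone}(U)$ meets the interior of $C$. So the content of the proposition becomes: the adjoint sum over simplices containing an \emph{interior} face of the triangulation vanishes on the orthocomplement of that face's span.

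The main step is to exploit this interiority by projecting. Let $\pi$ be orthogonal projection onto $W_U$, so that $(v \cdot x) = (\pi(v) \cdot x)$ for $x \in W_U$. For each $S \supseteq \text{cone}(U)$ the projections $\{\pi(v) : v \in V(S) \setminus U\}$ form a basis of $W_U$ spanning a full-dimensional simplicial cone $\bar S$, and the base-times-height factorization gives $a_S = a_U \cdot \vol(\bar S)$ with $a_U := \vol(\text{pped}(U))$ independent of $S$. Pulling out $a_U$ and the factors $(\pi(v) \cdot x)$ coming from vertices not adjacent to $\text{cone}(U)$ (which occur in every surviving summand), I would rewrite $\adj_C|_{W_U}$ as $a_U \cdot (\text{common factor}) \cdot \Phi(x)$, where $\Phi(x) = \sum_{\bar S} \vol(\bar S) \prod_{w \in W' \setminus V(\bar S)}(w \cdot x)$ is the adjoint sum of the $\bar S$ over their common vertex set $W'$. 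Since $\text{cone}(U)$ is interior, the $\bar S$ are the maximal cones of the \emph{complete} simplicial quotient fan in $W_U$ (a neighborhood of $\text{relint}(\text{cone}(U))$ in $C$ surjects onto a neighborhood of $0$ in $W_U$). Completeness is exactly what forces $\Phi \equiv 0$: running the cancellation of the proof of Theorem \ref{thm:adjoint-in-terms-of-facets} verbatim, for any linear $L$ one expands $L(x)\Phi(x)$ as a sum over (chamber, wall) pairs, and because every wall of a complete fan is shared by exactly two chambers with opposite inward normals, all terms cancel; as $L$ is arbitrary, $\Phi \equiv 0$.

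I expect the completeness of the quotient fan — equivalently, the dictionary that a residual subspace is the orthocomplement of the span of an \emph{interior} face — to be the crux. One must argue carefully that the maximal cones of $T$ containing an interior face project onto a complete fan, and that the volume factorization and the product bookkeeping reduce $\adj_C|_{W_U}$ to the adjoint sum of that fan. Once this geometric reduction is in place, the vanishing is an immediate reprise of the interior-facet cancellation already established, so the genuinely new work lies in the reduction rather than in any fresh computation.
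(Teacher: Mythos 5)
Your argument is correct, and it takes a genuinely different route from the paper. The paper proves the proposition by induction on dimension: it writes a residual component as $R=H_1\cap\cdots\cap H_c$, expands $L(x)\adj_C(x)$ via the facet formula of Theorem \ref{thm:adjoint-in-terms-of-facets}, kills each facet term either because some $(v_i\cdot x)$ occurs in the product or because all $v_i$ lie in $F$ — in which case $R/\RR n_F$ lands in the residual arrangement of $F^{\vee}$ and the inductive hypothesis applies. You instead work directly with Definition \ref{def:adjoint} for a fixed triangulation: you localize the defining sum at the star of $\mathrm{cone}(U)$, where $U$ is the set of vertex rays whose dual hyperplanes contain the residual subspace; you translate residuality into the statement that $\mathrm{cone}(U)$ lies in no proper face of $C$, hence meets its interior; and you project the star to a \emph{complete} simplicial fan in $(\spn U)^{\perp}$, reducing the claim to the identity that the adjoint of a complete fan vanishes — itself an instance of the interior-wall cancellation already used to prove Theorem \ref{thm:adjoint-in-terms-of-facets}. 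Each approach has something to recommend it. The paper's induction is shorter once Theorem \ref{thm:adjoint-in-terms-of-facets} is available and sidesteps quotient fans entirely. Your argument is triangulation-native and makes the geometric mechanism transparent: it proves the slightly cleaner statement that $\adj_C$ vanishes on $(\spn U)^{\perp}$ for \emph{every} subset $U\subseteq V(C)$ not contained in a proper face, and it isolates the reusable principle ``complete fan $\Rightarrow$ adjoint $\equiv 0$.'' The points you flag as needing care are real but standard and do go through: the star of a face of a fan projects to a fan in the quotient (one checks $\pi(S)\cap\pi(S')=\pi(S\cap S')$ using $S+\mathrm{cone}(U)\subseteq S$), completeness follows because the star is a neighborhood of $\mathrm{relint}(\mathrm{cone}(U))$, which lies in the interior of $C$; the factorization $a_S=a_U\cdot\vol(\bar S)$ is the Gram--Schmidt/base-times-height identity; and the map $v\mapsto\pi(v)$ is injective on the star's vertices off $U$ because the generators of each simplex stay independent after projecting along $\spn U$, so the product bookkeeping is consistent. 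One small definitional point worth recording in a writeup: ``$U\subseteq\spn F$'' is equivalent to ``$U\subseteq F$'' because $\spn F$ lies in the supporting hyperplane of $F$ and $U\subseteq C$; this is what makes your dictionary between residual subspaces and interior faces airtight.
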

\begin{proof}
We proceed by induction on the dimension $m+1$ of the cone $C$, the base case of dimension one being trivial, since the residual arrangement in this case is empty.

If $m+1>1$, let $R$ be an irreducible component of $\mc{R}_{C^{\vee}}$ of codimension $c$, so $R=H_1 \cap \cdots \cap H_c$ for some hyperplanes $H_i \in \mc{H}_{C^{\vee}}$.  Let $v_1,\ldots ,v_c$ be the corresponding vertices of $C$, so that $H_i$ is the hyperplane orthogonal to $v_i$.  Since $R$ is assumed not to contain a face of $C^{\vee}$, we know that $\{ v_1,\ldots,v_c \}$ is not the set of vertices of any face of $C$.

We will show that $\adj_C$ vanishes on $R$ by showing that each summand on the right hand side of Theorem \ref{thm:adjoint-in-terms-of-facets} vanishes.  Let $F \in f(C)$ be any facet.  If some $v_i \not \in V(F)$, then the term $(v_i \cdot x)$ appears in the product, and so the product vanishes on $R \subset H_i$, thus we may assume that $v_1,...,v_c \in F$ and therefore that $n_F \in R$.

Now, viewing $F$ itself as a full-dimensional cone in the $m$-dimensional space $\spn(F)=\RR^{m+1}/\RR n_F$, note that $H_i/\RR n_F$ is the supporting hyperplane of a facet of $F^{\vee}$ (since $v_i$ is a vertex of $F$).  Since $\{v_1,\ldots,v_c\}$ is not a face of $C$, it is not a face of $F$ either, and so 
\[
R'=H_1/\RR n_F \cap \cdots \cap H_c/\RR n_F=R/\RR n_F
\]
is contained in the residual arrangement $\mc{R}_{F^{\vee}}$ of $F^{\vee}$.  By induction, $\adj_F$ vanishes on $R'$, and therefore it vanishes on any element of $R' \oplus \RR n_F = R$ ($F$ is not simplicial, otherwise $\{v_1,...,v_c\} \subset F$ would be a face).  Thus $\adj_C$ vanishes on $R$ as desired.
\end{proof}

\subsection{Uniqueness of adjoints}

We say that $\mc{H}_P$ is \emph{simple} if at most $m$ hyperplanes pass through any point in $\PP^m$ (equivalently, at most $m$ hyperplanes of $\mc{H}_C$ pass through any nonzero point in $\RR^{m+1}$). 

\begin{theorem}[Kohn and Ranestad \cite{Kohn-Ranestad}] 
\label{thm:minimality-theorem}
Let $P$ be a full-dimensional polytope in $\PP^m$ with $d$ facets.  If the hyperplane arrangement $\mc{H}_P$ is simple, there is a unique hypersurface $A_P$ in $\PP^m$ of degree $d-m-1$ which contains the residual arrangement $\mc{R}_P$.  
\end{theorem}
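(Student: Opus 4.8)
The plan is to handle existence and uniqueness separately, with essentially all of the content living in the uniqueness half.

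For existence I would simply exhibit the adjoint of the dual cone as the desired hypersurface. Viewing $P^\vee$ as a full-dimensional cone in $\RR^{m+1}$, its vertex rays are the facet normals of $P$, so $|V(P^\vee)|=d$ and $\dim(P^\vee)=m+1$. Proposition \ref{prop:degree-of-adjoint} then gives $\deg\adj_{P^\vee}=d-(m+1)=d-m-1$, while Proposition \ref{prop:adjoint-vanishes}, applied with $C=P^\vee$ and using $(P^\vee)^\vee=P$, shows that $\adj_{P^\vee}$ vanishes on $\mathcal{R}_P$. Hence $A_P=\{\adj_{P^\vee}=0\}$ is a hypersurface of the required degree through the residual arrangement.

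For uniqueness I would induct on $m$. The base case $m=1$ is trivial ($\mathcal{R}_P$ is empty and $d-m-1=0$), and the first substantive case $m=2$ recovers the classical planar fact: for a convex $k$-gon in general position the residual points number $\binom{k}{2}-k=\tfrac12 k(k-3)$, which is exactly one less than the dimension of the space of plane curves of degree $k-3$, so a short check that these points impose independent conditions pins the adjoint down. For the inductive step, let $f$ be any form of degree $d-m-1$ vanishing on $\mathcal{R}_P$; the goal is $f\propto\adj_{P^\vee}$. The crucial geometric input, and precisely where simplicity enters, is that in a simple polytope two facets that share no ridge are disjoint: at a simple vertex the incident facets are the facets of an orthant, so any two facets meeting at a vertex already share a ridge through it. Consequently, if $F'$ is a facet not adjacent to a fixed facet $F$, then $H_F\cap H_{F'}$ contains no vertex, hence no face, of $P$, so it lies entirely in $\mathcal{R}_P$. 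Thus $f|_{H_F}$ is divisible by the trace of each such $H_{F'}$; dividing these out leaves a factor of degree exactly $d_F-m$ (where $d_F$ is the number of facets of $F$) which vanishes on the residual arrangement $\mathcal{R}_F$ of $F$, and by the inductive hypothesis this factor is proportional to $\adj_{F^\vee}$. The identical reasoning applies to $\adj_{P^\vee}$ itself, so $f|_{H_F}=c_F\,\adj_{P^\vee}|_{H_F}$ for a scalar $c_F$, with $\adj_{P^\vee}|_{H_F}\neq 0$.

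It then remains to glue the restrictions. For adjacent facets $F,F'$ the two expressions agree on the ridge $H_F\cap H_{F'}$, on which $\adj_{P^\vee}$ does not vanish identically (that ridge contains a face of $P$), forcing $c_F=c_{F'}$; since the facet-adjacency graph of $P$ is connected, all $c_F$ equal one constant $c$. Then $f-c\,\adj_{P^\vee}$ vanishes on all $d$ facet hyperplanes and so is divisible by $\prod_{F}\ell_F$, a form of degree $d>d-m-1$; being of degree $d-m-1$ it must vanish, giving $f=c\,\adj_{P^\vee}$ and hence uniqueness. I expect the main obstacle to be the inductive reduction on each facet: simplicity cleanly produces the linear factors from disjoint facets, but one must verify that after removing them the remaining factor genuinely sees all of $\mathcal{R}_F$, i.e.\ that no irreducible component of the facet's residual arrangement is absorbed into one of those removed hyperplanes. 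The zero-dimensional components are immediately safe by simplicity, but controlling such coincidences for the positive-dimensional components, so that the inductive hypothesis for $F$ truly applies, is the delicate point of the argument.
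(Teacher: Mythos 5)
First, a point of comparison: the paper does not actually prove this theorem --- it is imported from Kohn--Ranestad without proof, and the only argument the paper attaches to it is the one-sentence remark following the statement, which identifies $A_P$ with the zero locus of $\adj_{C^{\vee}}$ via Propositions \ref{prop:degree-of-adjoint} and \ref{prop:adjoint-vanishes}. That remark is precisely your existence argument, so that half matches the paper exactly. The uniqueness half is entirely your own reconstruction; its strategy (induct on $m$, restrict a candidate form $f$ to each facet hyperplane $H_F$, split off the linear factors cut out by the non-adjacent facet hyperplanes, apply the inductive hypothesis to the facet, and glue the resulting scalars along the connected facet-adjacency graph) is the right one and is in the spirit of the cited proof. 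The degree bookkeeping $(d-m-1)-(d-1-d_F)=d_F-m$ is correct, as is the observation that simplicity forces two intersecting facets to share a ridge.

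However, the inductive step has a genuine gap, and it is the one you flag yourself. To apply the inductive hypothesis to $F$ you need the quotient $g=f|_{H_F}/\prod\ell_{F'}$ (product over non-adjacent $F'$) to vanish on \emph{every} component of $\mc{R}_F$, which fails if such a component lies inside one of the hyperplanes $H_F\cap H_{F'}$ you divided out; relatedly, the degree count needs the traces $H_F\cap H_{F'}$ to be pairwise distinct. Simplicity as defined here only forbids more than $m$ hyperplanes through a \emph{point}, so it disposes of the zero-dimensional strata but does not obviously forbid $k+2\le m$ hyperplanes of $\mc{H}_P$ containing a common codimension-$(k+1)$ subspace --- exactly the configuration that would break the step. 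Until that is ruled out or the induction restructured, the argument does not close; this is where the real content of the Kohn--Ranestad proof lives. Two smaller assertions you lean on --- that $\adj_{P^{\vee}}|_{H_F}\not\equiv 0$ and that $\adj_{P^{\vee}}$ is not identically zero on a ridge --- are true but need justification (e.g.\ positivity of each summand of (\ref{eq:adjoint-of-polytope}) on the relevant relative interiors). Finally, your separate $m=2$ base case is both unnecessary (the induction from $m=1$ already covers it) and not actually a ``short check,'' since the independence of the conditions imposed by the residual points is the whole question in that dimension.
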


By Proposition \ref{prop:adjoint-vanishes} and Proposition \ref{prop:degree-of-adjoint} we know that this unique hypersurface is the zero locus of $\adj_{C^{\vee}}$.

\begin{example}
Let $P \subset \PP^3$ have the combinatorial type of a $3$-cube, so $P$ has $d=6$ facets.  Consider several cases:
\begin{itemize}
    \item If $P$ is generic, so that none of the pairs of opposite facets are parallel, then $\mc{H}_P$ is simple.  In this case $\mc{R}_P$ consists of three skew lines, the intersections of the supporting hyperplanes for the pairs of opposite faces.  $A_P$ is the unique quadric (degree $6-3-1=2$) passing through these three lines, and is defined by the polynomial $\adj_{C^{\vee}}$.
    \item If $P$ is a regular cube, so that all three pairs of opposite facets are parallel, then each of these pairs of hyperplanes intersect in a line contained in the plane at infinity (that is, $x_0=0$); so $\mc{R}_P$ consists of these three lines in a plane.  Each pair of these lines must intersect, so $\mc{H}_P$ is not simple, because four hyperplane pass through such an intersection point.  In this case $\mc{R}_P$ is contained in a degree one hypersurface $\{x_0=0\}$; it is still defined as a set by $\adj_{C^{\vee}}=x_0^2$, although this is no longer a reduced scheme.
\end{itemize}
\end{example}

What can be said when $\mc{H}_P$ is not simple?

\begin{proposition}[Kohn and Ranestad \cite{Kohn-Ranestad}]
\label{prop:limits-of-simples}
Let $P$ be a full-dimensional polytope in $\PP^m$.  If $P'_t$ and $P''_t$ are continuous families of polytopes with simple hyperplane arrangements such that 
\[
\lim_{t \to \infty} P'_t = \lim_{t \to \infty} P''_t =P,
\]
then the limits of their hypersurfaces coincide:
\[
\lim_{t \to \infty} A_{P'_t} = \lim_{t \to \infty} A_{P''_t} := A_P.
\]
And $A_P$ is the zero locus of $\adj_{C^{\vee}}$, but may not be reduced.
\end{proposition}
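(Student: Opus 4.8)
The plan is to reduce the statement to the \emph{continuity} of the adjoint in the data defining a polytope, and then to note that both families feed the same limiting data into this continuous assignment. The starting point is the remark following Theorem~\ref{thm:minimality-theorem}: whenever a polytope $Q$ has simple supporting arrangement, its distinguished hypersurface $A_Q$ is exactly the zero locus $\{\adj_{D^{\vee}}=0\}$, where $D$ denotes the cone over $Q$. Applying this to each member of the two families, it suffices to control the limiting behavior of the forms $\adj_{(C'_t)^{\vee}}$ and $\adj_{(C''_t)^{\vee}}$ as $t\to\infty$, since these forms determine $A_{P'_t}$ and $A_{P''_t}$ completely.

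First I would pin down the sense in which the hypersurfaces converge. As every $A_{P'_t}$ and $A_{P''_t}$ has degree $d-m-1$, I regard each as a point of the projective space $\PP(W)$, where $W$ is the space of forms of degree $d-m-1$ in $x_0,\ldots,x_m$; a family of hypersurfaces converges precisely when the associated defining forms, taken up to scale, converge in $\PP(W)$. With this convention the proposition becomes the claim that the coefficient vector of $\adj_{(C_t)^{\vee}}$ admits a well-defined nonzero limit, and that this limit depends only on $P=\lim_t P_t$ and not on the approaching family.

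The heart of the argument is the continuity of $Q \mapsto \adj_{D^{\vee}}$ across the degeneration. Because the failure of simplicity of $\mc{H}_P$ occurs along the residual arrangement $\mc{R}_P$, away from the faces of $P$, a small perturbation separating coincident points of $\mc{R}_P$ leaves the face lattice of $P$ intact; hence for large $t$ the polytopes $P'_t$ and $P''_t$, and their duals, are combinatorially equivalent to $P$ and $P^{\vee}$. I would therefore fix a single combinatorial triangulation $\tau$ of $C^{\vee}$ introducing no new rays and use it for every member of both families. With $\tau$ fixed, Definition~\ref{def:adjoint} exhibits each coefficient of $\adj_{(C_t)^{\vee}}$ as a polynomial in continuously chosen generating rays of $(C_t)^{\vee}$---i.e.\ in the facet normals of $P_t$---since the volumes $a_S$ are polynomial in those rays and the factors $(v\cdot x)$ depend on them linearly (by the Remark this choice alters $\adj$ only by a nonzero scalar, harmless in $\PP(W)$). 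Since $P'_t\to P$ drives these rays to generators of $C^{\vee}$, we get $\adj_{(C'_t)^{\vee}}\to \adj_{C^{\vee}}$ coefficient-wise, and identically for the double-primed family. As both families limit to the \emph{same} $P$, they yield the \emph{same} form $\adj_{C^{\vee}}$, so the hypersurface limits coincide and equal $\{\adj_{C^{\vee}}=0\}$. Non-reducedness of $A_P$ then follows from the regular-cube example, where the limit form is $x_0^2$ although each nearby simple-arrangement cube has reduced $A_{P'_t}$.

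The step I expect to require the most care is the combinatorial stabilization just used: I must be sure that the simplicity-breaking incidences of $\mc{H}_P$ really are confined to $\mc{R}_P$ and lie strictly outside the closed polytope, so that a sufficiently small transverse perturbation of the offending hyperplanes disturbs only these external incidences and not the face lattice---only then can a single combinatorial $\tau$ serve the whole family. A secondary point is to confirm that the limit form is nonzero, so that it genuinely defines a point of $\PP(W)$; this follows from Proposition~\ref{prop:degree-of-adjoint}, which fixes $\deg\adj_{C^{\vee}}=|V(C^{\vee})|-\dim(C^{\vee})=d-m-1$, together with the full-dimensionality of $P$ (equivalently, $\adj_{C^{\vee}}$ is the nonvanishing numerator of $\Omega_P$ provided by Theorem~\ref{thm:adjoint-is-numerator}).
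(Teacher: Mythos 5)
The paper itself states this proposition without proof (it is simply quoted from Kohn--Ranestad), so there is no in-house argument to compare yours against; I can only assess the proposal on its own terms. Your overall strategy --- interpret the limit of hypersurfaces as a limit of defining forms in $\PP(W)$, identify $A_{P'_t}$ with the zero locus of $\adj_{(C'_t)^{\vee}}$ via Theorem \ref{thm:minimality-theorem}, and reduce everything to continuity of the adjoint in the facet data --- is the right one. The gap is exactly at the step you flagged, and it is a real one: it is \emph{not} true that the failure of simplicity of $\mc{H}_P$ is confined to $\mc{R}_P$ away from the faces of $P$. The arrangement $\mc{H}_P$ also fails to be simple whenever some vertex of $P$ lies on more than $m$ facets (a square pyramid or an octahedron in $\PP^3$, say): the offending intersection point is then a face of $P$, so it does not belong to $\mc{R}_P$, and any perturbation making the arrangement simple necessarily changes the face lattice by splitting that vertex. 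Worse, the two families $P'_t$ and $P''_t$ may resolve such a vertex in combinatorially different ways, so there is no single combinatorial triangulation $\tau$ of $C^{\vee}$ that restricts to triangulations of all the $(C'_t)^{\vee}$ and $(C''_t)^{\vee}$; your coefficient-wise convergence argument, which hinges on evaluating one fixed $\tau$ along both families, does not get off the ground in precisely the cases where the proposition has content.

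The continuity can be rescued without any combinatorial stabilization by routing it through Theorems \ref{thm:adjoint-volume-of-shifted-dual} and \ref{thm:adjoint-is-numerator} instead of through Definition \ref{def:adjoint}. For $x$ in a fixed small ball inside the interior of $P$ (hence inside $P'_t$ for large $t$) those results give
\[
\adj_{(P'_t)^{\vee}}(x)=\vol\bigl((P'_t-x)^{\vee}\bigr)\prod_{F}\bigl(1-v_F(t)\cdot x\bigr),
\]
and the right-hand side converges to the corresponding expression for $P$ because polar duality and volume are continuous on convex bodies containing the origin in their interior, while the product is polynomial in the facet normals $v_F(t)$. Pointwise convergence on an open set, for polynomials of the fixed degree $d-m-1$, forces coefficient-wise convergence, so both families limit to $\adj_{P^{\vee}}$, which is nonzero since it is the numerator of $\Omega_P$. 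This yields the family-independence and the identification of $A_P$ in one stroke; your closing remarks on the degree count and on non-reducedness (the $x_0^2$ cube example) are fine as written.
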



\bibliographystyle{plain}
\bibliography{main}

\end{document}